\newtheorem{theo}{ {\bf Theorem} }
\newtheorem{lemm}[theo]{ {\bf Lemma} }
\newtheorem{exam}[theo]{ {\bf Example} }
\newcommand{\ZZ}{{\mathbb Z}}
\newcommand{\QQ}{{\mathbb Q}}
\newcommand{\RR}{{\mathbb R}}
\newcommand{\CC}{{\mathbb C}}
\newcommand{\HH}{{\mathbb H}}
\newcommand{\ii}{{i}}
\newcommand{\im}{\mathop{\rm{Im}}\nolimits}
\newcommand{\re}{\mathop{\rm{Re}}\nolimits}
\newcommand{\etp}{{\mathbf{e}}}
\newcommand{\rmnz}{\mathop{\zeta_{\rm{R}}}\nolimits}
\newcommand{\abcd}{\begin{pmatrix}a&\!\!b\\c&\!\!d\end{pmatrix}}
\newcommand{\unmt}{\begin{pmatrix}1&\!\!0\\0&\!\!1\end{pmatrix}}
\newcommand{\mats}{\begin{pmatrix}0&\!\!-1\\1&\!\!0\end{pmatrix}}
\newcommand{\SL}{{\mathrm{SL}}}
\newcommand{\ala}{{\mathrm{A}}_2}
\newcommand{\alb}{{\mathrm{B}}_2}
\newcommand{\alg}{{\mathrm{G}}_2}
\begin{document}


\begin{center}
{\Large\bf{Modular forms from the Weierstarss functions}}
\end{center}

\bigskip

\begin{center}
 {\large
 Hiroki Aoki\footnote{
 Department of Mathematics, Faculty of Science and Technology,
 Tokyo University of Science, Noda, Chiba, 278-8510, Japan.
 aoki{\_}hiroki@ma.noda.tus.ac.jp
 }
 \quad
 and
 \quad
 Kyoji Saito\footnote{
 Institute for Physics and Mathematics of the Universe,
 the University of Tokyo, 5-1-5 Kashiwanoha, Kashiwa, 277-8583, Japan.
 kyoji.saito@ipmu.jp
 }\footnote{
 Research Institute for Mathematical Sciences, Kyoto University,
 Sakyoku Kitashirakawa, Kyoto, 606-8502, Japan.
 }\footnote{
 Laboratory of AGHA, Moscow Institute of Physics and Technology,
 9 Institutskiy per., Dolgoprudny, Moscow Region, 141700, Russian Federation.
 }
 }
\end{center}

\bigskip

\begin{abstract}
We construct holomorphic elliptic modular forms
of weight \( 2 \) and weight \( 1 \),
by special values of Weierstrass \( \wp \)-functions,
and
by differences of special values of Weierstrass
\( \zeta \)-functions,
respectively.
Also we calculated the values of these forms at
some cusps.
\end{abstract}

\noindent
Keywords:
Weierstrass \( \wp \)-function,
Weierstrass \( \zeta \)-function,
elliptic modular forms.
period integral.

\noindent
2010 Mathematics Subject Classification:
Primary 33E05,
Secondary 11F12.


\section*{Introduction}

In the study of Jacobi inversion problem for the period maps
associated with primitive forms
of types \( \ala , \alb \) and \( \alg \),
the second named author has introduced a concept of
Eisenstein series of types \( \ala ,
\alb \) and \( \alg \) (cf.~\cite[{\S}8]{Sa1}).

First,
Eisenstein series of type \( \ala \) are
nothing but the classical Eisenstein series.
In this case
their weights are always equal or greater than \( 3 \).

Second,
Eisenstein series of types \( \alb\) and \( \alg \),
for the case when their weights are equal or greater than \( 3 \),
are described
by shifted classical Eisenstein series~\cite[{\S}8]{Sa1}.
Their holomorphicity at cusps and the values at cusps
can be shown and calculated similar
to the classical Eisenstein series
by helps of Riemann's zeta-function or Dirichlet's L-functions.

Third,
Eisenstein series of types \( \alb\) and \( \alg \),
for the case when their weights are equal or less than \( 2 \),
have completely different expressions.
The weight \( 2 \) Eisenstein series
of types \( \alb \) and \( \alg \) have
the expressions as special values
of Weierstrass \( \wp \)-functions.
The weight \( 1 \) Eisenstein series
of type \( \alg \) has
the expression as a difference of special values
of Weierstrass \( \zeta \)-functions.

All of these Eisenstein series
of types \( \ala \), \( \alb\) and \( \alg \),
should be elliptic modular forms,
due to the theory of period maps introduced by the second author.
Actually, in the first and second cases,
it is also clear by their expressions.
However, in the third case,
it is not so obvious.
In the present paper,
we give a short way to construct modular forms
of weight \( 2 \) and \( 1 \), including the above third case.


\section{From the Weierstrass \( \wp \)-function}

In this section we construct
elliptic modular forms of weight \( 2 \) from
the Weierstrass \( \wp \)-function.
Since the Weierstrass \( \wp \)-function
can be recognized as a meromorphic Jacobi form
of weight \( 2 \) and index \( 0 \),
its special value at \( z= s \tau +t \;
( \; (s,t) \in \QQ^2 - \ZZ^2 \;) \) has
a modularity.
Therefore, it turns out to be an elliptic modular form,
by showing the holomorphicity at each cusps.
It has been already done
by direct calculation (cf.~\cite[{\S}4.6]{DS1}),
however, here we give a proof by using Jacobi forms.

\subsection{Definition and notation}

Let \( \Omega \subset \CC \) be a \( \ZZ \)-module
generated by two \( \RR \)-linearly independent elements.
The Weierstrass \( \wp \)-function is defined by
\begin{align*}
 \wp ( \Omega ,z)
 & :=
 \frac{1}{z^2} + \sum_{\omega \in \Omega - \{ 0 \} } \left(
 \frac{1}{(z- \omega )^2} - \frac{1}{\omega^2} \right)
 \\
 & =
 \frac{1}{z^2} + \sum_{\omega \in \Omega - \{ 0 \} }
 \frac{(2\omega-z)z}{(\omega-z)^2\omega^2}
 \\
 & =
 \frac{1}{z^2} + \sum_{\omega \in \Omega - \{ 0 \} }
 \frac{\left(2-\frac{z}{\omega}\right)z}{\left(1-\frac{z}{\omega}\right)^2}
 \cdot \frac{1}{\omega^3}
 .
\end{align*}
For a while we fix \( \Omega \).
Since the sum in the third line of the above definition converges
absolutely and locally uniformly
with respect to \( z \), \( \wp ( \Omega ,z) \) is
a meromorphic function on \( z \) in \( \CC \).
The set of all poles of \( \wp ( \Omega ,z) \) is \( \Omega \) and
the order at each pole is \( 2 \).
Also, it is doubly periodic, namely,
\[
 \wp ( \Omega ,z) = \wp ( \Omega ,z+ \omega )
 \quad ( \omega \in \Omega )
 .
\]

\bigskip

In this paper we move \( \Omega \) as well as \( z \).
It is easy to see that
\[
 \wp ( \Omega ,z) = j^2 \wp (j \Omega ,jz)
 \quad ( j \in \CC - \{ 0 \} )
 .
\]
Here we put
\[
 \wp ( \tau ,z ) := \wp ( \tau \ZZ + \ZZ , z)
 .
\]
By a similar argument as above, we know
that \( \wp \) is a meromorphic function on \( \HH \times \CC \),
where we denote the complex upper half plane by
\[
 \HH := \left\{ \; \tau \in \CC \; \middle| \; \im \tau >0 \; \right\}
 .
\]
The set of all poles of \( \wp ( \tau ,z ) \) is \(
\{ \; (\tau,z) \; | \; \exists s,t \in \ZZ \text{ s.t. } z= s \tau +t \; \} \).

\subsection{Construction of elliptic modular forms}

For \( (s,t) \in \QQ^2 - \ZZ^2 \), we define
a holomorphic function on \( \HH \) by
\[
 f_{(s,t)} ( \tau ) := \wp ( \tau , s \tau +t)
 .
\]
Then the following lemma holds.
\begin{lemm}
\label{lemm:fsta}
We have
\[
 \left( f_{(s,t)} |_2 A \right) ( \tau ) :=
 (c \tau +d)^{-2} f_{(s,t)} \left( \frac{a \tau +b}{c \tau +d} \right)
 = f_{(s,t)A} ( \tau )
\]
for any \( A = \abcd \in \SL (2, \ZZ ) \).
\end{lemm}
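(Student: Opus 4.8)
The plan is to reduce everything to the homogeneity relation $\wp(\Omega,z) = j^2\wp(j\Omega,jz)$ already recorded in the excerpt, applied with the scaling factor $j = c\tau+d$. First I would unfold the definitions: writing $\tau' := (a\tau+b)/(c\tau+d)$, the left-hand side involves $f_{(s,t)}(\tau') = \wp(\tau'\ZZ+\ZZ,\,s\tau'+t)$, so the task is to compare the lattice $\tau'\ZZ+\ZZ$ and the argument $s\tau'+t$ with their counterparts for $\tau$.

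The key algebraic step is the lattice identification. I would compute the rescaled lattice
\[
 (c\tau+d)(\tau'\ZZ+\ZZ) = (c\tau+d)\tau'\cdot\ZZ + (c\tau+d)\ZZ = (a\tau+b)\ZZ + (c\tau+d)\ZZ .
\]
Since $A = \abcd \in \SL(2,\ZZ)$, the two generators $a\tau+b$ and $c\tau+d$ are integral linear combinations of $\tau$ and $1$ with determinant $ad-bc=1$, hence form another $\ZZ$-basis of $\tau\ZZ+\ZZ$; thus $(c\tau+d)(\tau'\ZZ+\ZZ) = \tau\ZZ+\ZZ$. Applying homogeneity with $j = c\tau+d$ then gives
\[
 \wp(\tau'\ZZ+\ZZ,\,z) = (c\tau+d)^2\,\wp(\tau\ZZ+\ZZ,\,(c\tau+d)z).
\]

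Next I would track the argument. Setting $z = s\tau'+t$ and using $(c\tau+d)\tau' = a\tau+b$,
\[
 (c\tau+d)(s\tau'+t) = s(a\tau+b) + t(c\tau+d) = (sa+tc)\tau + (sb+td),
\]
which is precisely $s'\tau+t'$ for $(s',t') = (s,t)A$. Substituting $z = s\tau'+t$ into the displayed homogeneity relation and dividing by $(c\tau+d)^2$ yields $(c\tau+d)^{-2} f_{(s,t)}(\tau') = \wp(\tau\ZZ+\ZZ,\,s'\tau+t') = f_{(s,t)A}(\tau)$, as claimed.

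I do not expect any serious obstacle: the argument is a direct computation once one commits to the row-vector convention $(s,t)A$. The only point requiring care is bookkeeping — confirming that multiplication by $c\tau+d$ sends the index $(s,t)$ to the row vector $(s,t)A = (sa+tc,\,sb+td)$ rather than to some other combination — together with the observation that $(s,t)A$ again lies in $\QQ^2-\ZZ^2$, so that $f_{(s,t)A}$ is defined; this holds because right multiplication by $A\in\SL(2,\ZZ)$ preserves both $\QQ^2$ and $\ZZ^2$, hence their difference.
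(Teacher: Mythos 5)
Your proposal is correct and follows essentially the same route as the paper: unfold $f_{(s,t)}$ at $\tau'=(a\tau+b)/(c\tau+d)$, apply the homogeneity $\wp(\Omega,z)=j^2\wp(j\Omega,jz)$ with $j=c\tau+d$, identify $(a\tau+b)\ZZ+(c\tau+d)\ZZ=\tau\ZZ+\ZZ$ via the determinant condition, and read off the transformed index $(s,t)A=(sa+tc,\,sb+td)$. Your added check that $(s,t)A$ stays in $\QQ^2-\ZZ^2$ is a small point the paper leaves implicit.
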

\proof
\begin{align*}
 \left( f_{(s,t)} |_2 A \right) ( \tau )
 & :=
 (c \tau +d)^{-2} f_{(s,t)} \left( \frac{a \tau +b}{c \tau +d} \right)
 \\
 & =
 (c \tau +d)^{-2} \wp \left( \frac{a \tau +b}{c \tau +d} ,
 s \frac{a \tau +b}{c \tau +d} +t \right)
 \\
 & =
 (c \tau +d)^{-2} \wp \left( \frac{a \tau +b}{c \tau +d} \ZZ + \ZZ ,
 s \frac{a \tau +b}{c \tau +d} +t \right)
 \\
 & =
 \wp \bigl( (a \tau +b) \ZZ + (c \tau +d) \ZZ ,
 s(a \tau +b)+t(c \tau +d) \bigr)
 \\
 & =
 \wp \bigl( \tau \ZZ + \ZZ , (sa+tc) \tau + (sb+td) \bigr)
 \\
 & =
 \wp \bigl( \tau , (sa+tc) \tau + (sb+td) \bigr)
 \\
 & =
 f_{(s,t)A} ( \tau )
 .
\end{align*}
\qed

\noindent
Hence, especially, we have \( f_{(s,t)} |_2 A = f _{(s,t)} \) for
any \( A \in \Gamma_{(s,t)} \), where we denote by
\[
 \Gamma_{(s,t)} :=
 \left\{ \; A \in \SL (2, \ZZ ) \; \middle| \;
 (s,t)A-(s,t) \in \ZZ^2 \; \right\}
 .
\]
We remark that \( \Gamma_{(s,t)} \) is an elliptic modular group,
since \( \Gamma_{(s,t)} \) contains
the principal congruence subgroup of level \( L \)
\[
 \Gamma (L) :=
 \left\{ \; A \in \SL (2, \ZZ ) \; \middle| \;
 A \equiv \unmt \pmod L \; \right\}
 ,
\]
where \( L\) is the common denominator of \( s \) and \( t\).

\bigskip

In 1985, Eichler and Zagier mentioned that
the function \( \wp ( \tau ,z) \) is a meromorphic
Jacobi form of weight \( 2 \) and index \( 1 \).
Namely, in their book \cite{EZ1},
they gave the explicit formula of \( \wp ( \tau ,z) \)
as a quotient of holomorphic Jacobi forms \cite[Theorem. 3.6. (p.39)]{EZ1}:
\begin{align*}
 - \frac{3}{\pi^2} \wp ( \tau ,z)
 & =
 \frac{ \phi_{12,1} ( \tau ,z) }{ \phi_{10,1} ( \tau ,z) }
 \\
 & =
 \frac{\etp(z)+10+\etp(-z)}{\etp(z)-2+\etp(-z)} + 12
 \bigl( \etp(z)-2+\etp(-z) \bigr) \etp ( \tau ) + \cdots
 ,
\end{align*}
where \( \etp(*) := \exp \left( 2 \pi \ii * \right) \).
This series converges absolutely and locally uniformly
in \(\{ ( \tau ,z ) \in \HH \times \CC \; | \; | \im z | < \im \tau \}
\) (cf.~\cite[{\S}3.2.]{Ao1}).
Hence we have
\[
 \lim_{ \tau \to \ii \infty} f_{(s,t)} ( \tau )
 =
 \left\{
 \begin{array}{ll}
 - \dfrac{\pi^2}{3} \cdot
 \dfrac{\etp(t)+10+\etp(-t)}{\etp(t)-2+\etp(-t)} & \quad (s=0 ) \\[6mm]
 - \dfrac{\pi^2}{3} & \quad ( 0<|s|<1 )
 \end{array}
 \right.
 .
\]
Since \( f_{(s+m,t)} = f_{(s,t)} \) for any \( m \in \ZZ \),
we know that \( f_{(s,t)} \) is
holomorphic at \( \ii \infty \) for any \( (s,t) \in \QQ^2 - \ZZ^2 \),
namely, we have
\begin{equation}
\label{eq:fstc}
 \lim_{ \tau \to \ii \infty} f_{(s,t)} ( \tau )
 =
 \left\{
 \begin{array}{ll}
 - \dfrac{\pi^2}{3} \cdot
 \dfrac{\etp(t)+10+\etp(-t)}{\etp(t)-2+\etp(-t)} & \quad (s \in \ZZ ) \\[6mm]
 - \dfrac{\pi^2}{3} & \quad (s \not\in \ZZ )
 \end{array}
 \right.
 .
\end{equation}
Therefore, we have the following theorem.
\begin{theo}
\( f_{(s,t)} \) is an elliptic modular form of weight \( 2 \) with
respect to \( \Gamma_{(s,t)} \).
\end{theo}

\begin{exam}
The following two forms appear as Eisenstein series
of types \( \alb \) and \( \alg \) in Saito~\cite[{\S}8]{Sa1}, respectively.
(Eisenstein series appears in Saito~\cite{Sa1} should be distinguished
from the usual Eisenstein series appears in the theory of modular forms.)
\begin{itemize}
\item
\( f_{(0,\frac{1}{2})} \), which appears as \(
\omega_0^2 \wp \left( \frac{1}{2} \omega_0 \right) \) in Saito~\cite{Sa1},
is an elliptic modular form
of weight \( 2 \) with respect to \( \Gamma_0 (2) \). We have
\[
 \lim_{\tau \to \infty}
 f_{(0,\frac{1}{2})} ( \tau )=
 \frac{2}{3} \pi^2
 , \qquad
 \lim_{\tau \to \infty}
 \left( f_{(0,\frac{1}{2})} |_2 S \right) ( \tau )=
 - \frac{1}{3} \pi^2
 .
\]
Hence we have \( f_{(0,\frac{1}{2})} = \frac{2}{3} \pi^2 \alpha_2 \),
which is an unique modular forms of weight \( 2 \) with respect to \( \Gamma_0 (2) \) up
to constant multiplier.
\item
\( f_{(0,\frac{1}{3})} \), which appears as \(
\omega_0^2 \wp \left( \frac{1}{3} \omega_0 \right) \) in Saito~\cite{Sa1},
is an elliptic modular form
of weight \( 2 \) with respect to \( \Gamma_0 (3) \). We have
\[
 \lim_{\tau \to \infty}
 f_{(0,\frac{1}{3})} ( \tau )=
 \pi^2
 , \qquad
 \lim_{\tau \to \infty}
 \left( f_{(0,\frac{1}{3})} |_2 S \right) ( \tau )=
 - \frac{1}{3} \pi^2
 .
\]
Hence \( f_{(0,\frac{1}{3})} = \pi^2 \alpha_1^2 \),
which is an unique modular forms of weight \( 2 \) with respect to \( \Gamma_0 (3) \) up
to constant multiplier.
\end{itemize}
Here we put \( S:= \mats \) and
denote the Hecke congruence subgroup of level \( L \) by
\[
 \Gamma_0 (L) :=
 \left\{ \; A = \abcd \in \SL (2, \ZZ ) \; \middle| \;
 b \equiv 0 \pmod L
 \; \right\}
 .
\]
Notation \( \alpha_2 \) and \( \alpha_1 \) correspond
to \( \alpha \) which appears in Aoki-Ibukiyama~\cite[{\S}6]{AI1} as elliptic
modular forms of weight \( 2 \) and level \(2 \) and weight \( 1 \) and level \( 3 \),
respectively.
\end{exam}

\subsection{Values at cusps}

Although we have already known the values of \( f_{(s,t)} \) at
all cusps in previous subsection,
here we calculate them directly from the definition
without using the expression of \( \wp \) by Jacobi forms.
It is much easier than the calculation
of the Fourier expansion of \( f_{(s,t)} \),
essentially given in~\cite[{\S}4.6]{DS1}.

\bigskip

By Lemma \ref{lemm:fsta}, it is enough to calculate
\[
 \lim_{ \tau \to \ii \infty } f_{(s,t)} ( \tau )
\]
for any fixed \( 0 \leqq s<1 \) and \( 0 \leqq t<1 \).
Let \( L \) be a common denominator of \( s \) and \( t \).
Since \( f_{(s,t)} ( \tau +L ) = f_{(s,t)}( \tau ) \),
we may assume \( \re (\tau) <L \).
Also we assume \( \im ( \tau ) >L \).
Recall that
\[
 f_{(s,t)} ( \tau ) = \frac{1}{z(\tau)^2} +
 \sum_{(c,d) \in \ZZ^2 - \{(0,0)\}}
 \frac{
 \left(2-\frac{z(\tau)}{\omega(\tau)}\right)z(\tau)
 }{
 \left(1-\frac{z(\tau)}{\omega(\tau)}\right)^2
 }
 \cdot \frac{1}{\omega(\tau)^3}
 ,
\]
where \( \omega(\tau) :=c \tau +d \) and \( z(\tau):=s \tau +t \),
converges absolutely.
We decompose it as
\[
 f_{(s,t)} ( \tau ) = \frac{1}{z(\tau)^2}
 +
 \sum_{d \in \ZZ - \{ 0 \}}
 \frac{
 \left(2-\frac{z(\tau)}{d}\right)z(\tau)
 }{
 \left(1-\frac{z(\tau)}{d}\right)^2
 }
 \cdot \frac{1}{d^3}
 +
 f_{(s,t)}^* ( \tau )
 ,
\]
where we put
\[
 f_{(s,t)}^* ( \tau ) :=
 \sum_{\substack{c \in \ZZ - \{ 0 \} \\ d \in \ZZ }}
 \frac{
 \left(2-\frac{z(\tau)}{\omega(\tau)}\right)z(\tau)
 }{
 \left(1-\frac{z(\tau)}{\omega(\tau)}\right)^2
 }
 \cdot \frac{1}{\omega(\tau)^3}
 .
\]
First we show
that \( \lim_{ \tau \to \ii \infty } f_{(s,t)}^* ( \tau ) =0 \).
Since
\[
 \left| \frac{z(\tau)}{\omega(\tau)} \right|
 <
 \frac{1}{2}
 \quad \text{ for any } \tau
\]
except for finitely many \( (c,d) \),
we have
\[
 \left| f_{(s,t)}^* ( \tau ) -
 \sum_{\text{finite}}
 \frac{
 \left(2-\frac{z(\tau)}{\omega(\tau)}\right)z(\tau)
 }{
 \left(1-\frac{z(\tau)}{\omega(\tau)}\right)^2
 }
 \cdot \frac{1}{\omega(\tau)^3}
 \right| <
 10 \! \! \! \sum_{\substack{c \in \ZZ - \{ 0 \} \\ d \in \ZZ }}
 \left| \frac{z(\tau)}{\omega(\tau)^3} \right|
 .
\]
Here we use the following lemma.
\begin{lemm}
\label{lemm:eies}
Let \( k \geqq 3 \) be an integer.
Then we have
\[
 \sum_{\substack{c \in \ZZ - \{ 0 \} \\ d \in \ZZ }}
 \frac{1}{\left| \omega(\tau) \right|^k}
 <
 \frac{4}{( \im \tau )^k} \zeta_R (k)
 +
 \frac{2 \pi }{( \im \tau )^{k-1}} \rmnz (k-1)
 ,
\]
where we denote the Riemann's zeta-function by \( \zeta_R \).
\end{lemm}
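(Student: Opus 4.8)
The plan is to estimate the double sum by fixing the ``slope'' \( c \) and summing over \( d \) first, comparing the inner sum with an integral. Write \( \tau = \re \tau + \ii\, \im \tau \) and put \( y := \im \tau > 0 \), so that \( |\omega(\tau)|^2 = (c\,\re \tau + d)^2 + c^2 y^2 \geq c^2 y^2 \) for all \( c,d \). For a fixed \( c \in \ZZ - \{0\} \), set \( a := |c|\,y > 0 \); as \( d \) runs through \( \ZZ \) the quantity \( c\,\re \tau + d \) runs through the coset \( \alpha + \ZZ \) with \( \alpha := \{ c\,\re \tau \} \in [0,1) \), so the inner sum equals \( \sum_{n \in \ZZ} f(\alpha + n) \) with \( f(u) := (u^2 + a^2)^{-k/2} \).

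The key estimate is the integral comparison
\[
 \sum_{n \in \ZZ} f(\alpha + n) \;\leq\; 2 f(0) + \int_{-\infty}^{\infty} f(u)\,du .
\]
First I would isolate the two lattice points \( \alpha \) and \( \alpha - 1 \) nearest the origin and bound each corresponding term by \( f(0) = a^{-k} \), the maximum of the even function \( f \), which decreases on \( [0,\infty) \). For the remaining points \( f \) is monotone on each interval between consecutive lattice points: for \( n \geq 1 \) one has \( [\alpha + n - 1, \alpha + n] \subset [0,\infty) \), whence \( f(\alpha + n) \leq \int_{\alpha + n - 1}^{\alpha + n} f \), and summing telescopes to \( \sum_{n \geq 1} f(\alpha + n) \leq \int_0^\infty f \); by evenness the points with \( n \leq -2 \) contribute at most another \( \int_0^\infty f \), and together these make up \( \int_{-\infty}^{\infty} f \). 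I expect this monotonicity bookkeeping near the peak of \( f \) --- deciding exactly how many central terms must be set aside so that every remaining interval avoids the origin --- to be the only delicate point, and it is precisely what forces the factor \( 2 \) in front of \( f(0) \).

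It then remains to estimate the two pieces. Since \( f(0) = a^{-k} = |c|^{-k} y^{-k} \), the two isolated terms contribute at most \( 2 |c|^{-k} y^{-k} \). For the integral, using \( k \geq 2 \) I would bound \( (u^2 + a^2)^{k/2} \geq a^{k-2}(u^2 + a^2) \), so that
\[
 \int_{-\infty}^{\infty} \frac{du}{(u^2 + a^2)^{k/2}}
 \;\leq\; \frac{1}{a^{k-2}} \int_{-\infty}^{\infty} \frac{du}{u^2 + a^2}
 \;=\; \frac{\pi}{a^{k-1}}
 \;=\; \frac{\pi}{|c|^{k-1} y^{k-1}} .
\]
Hence the inner sum over \( d \) is at most \( 2 |c|^{-k} y^{-k} + \pi |c|^{-(k-1)} y^{-(k-1)} \). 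Finally I would sum over \( c \in \ZZ - \{0\} \), using \( \sum_{c \neq 0} |c|^{-m} = 2 \zeta_R(m) \) for \( m = k \) and \( m = k-1 \) (both convergent since \( k \geq 3 \)), which yields \( \tfrac{4}{y^k} \zeta_R(k) + \tfrac{2\pi}{y^{k-1}} \zeta_R(k-1) \), exactly the claimed bound. The strict inequality comes from the strict monotonicity of \( f \) in the integral comparisons.
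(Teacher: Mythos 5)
Your proof is correct and follows essentially the same route as the paper: fix \( c \), compare the inner sum over \( d \) with two boundary terms plus \( \int (x^2+(cy)^2)^{-k/2}\,dx \), then sum over \( c \) to produce the zeta values. The only cosmetic difference is that you bound the integral by the elementary inequality \( (u^2+a^2)^{k/2} \geq a^{k-2}(u^2+a^2) \), whereas the paper uses the substitution \( x = c\,(\im\tau)\tan\theta \) and the bound \( \cos^{k-2}\theta \leq 1 \); both give \( \pi a^{1-k} \).
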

\begin{proof}
\begin{align*}
 \sum_{\substack{c \in \ZZ - \{ 0 \} \\ d \in \ZZ }}
 \frac{1}{\left| \omega(\tau) \right|^k}
 &
 =
 2 \sum_{c=1}^\infty \sum_{d \in \ZZ}
 \frac{1}{\left| c \tau +d \right|^k}
 \\
 &
 <
 4 \sum_{c=1}^\infty \left(
 \frac{1}{(c \im \tau )^k} + \int_0^{\infty}
 \frac{dx}{\left( \sqrt{ (c \im \tau)^2 + x^2 } \right)^k}
 \right)
 \\
 &
 =
 4 \sum_{c=1}^\infty \left(
 \frac{1}{(c \im \tau )^k} + \int_0^{\frac{\pi}{2}}
 \frac{\left( \cos \theta \right)^{k-2} }{(c \im \tau )^{k-1}} d \theta
 \right)
 \\
 &
 <
 4 \sum_{c=1}^\infty \left(
 \frac{1}{(c \im \tau )^k} + \frac{\pi}{2}
 \frac{1}{(c \im \tau )^{k-1}}
 \right)
 \\
 &
 =
 \frac{4}{( \im \tau )^k} \zeta_R (k)
 +
 \frac{2 \pi }{( \im \tau )^{k-1}} \rmnz (k-1)
\end{align*}
\end{proof}
By using this lemma, we have
\[
 \lim_{ \tau \to \ii \infty }
 f_{(s,t)}^* ( \tau ) =
 \sum_{\text{finite}}
 \lim_{ \tau \to \ii \infty }
 \left( \frac{
 \left(2-\frac{z(\tau)}{\omega(\tau)}\right)z(\tau)
 }{
 \left(1-\frac{z(\tau)}{\omega(\tau)}\right)^2
 }
 \cdot \frac{1}{\omega(\tau)^3}
 \right)
 =0
 .
\]
Therefore, we have
\begin{equation}
 \label{eq:fsts}
 \lim_{ \tau \to \ii \infty }
 f_{(s,t)} ( \tau )
 =
 \lim_{ \tau \to \ii \infty }
 \frac{1}{z(\tau)^2}
 +
 \lim_{ \tau \to \ii \infty }
 \sum_{d \in \ZZ - \{ 0 \}}
 \frac{
 \left(2-\frac{z(\tau)}{d}\right)z(\tau)
 }{
 \left(1-\frac{z(\tau)}{d}\right)^2
 }
 \cdot \frac{1}{d^3}
 .
\end{equation}

\bigskip

\noindent
{\bf{(Case: \( s \neq 0 \))}} \\
From (\ref{eq:fsts}), we have
\[
 \lim_{ \tau \to \ii \infty } f_{(s,t)} ( \tau )
 =
 \sum_{d \in \ZZ - \{ 0 \} } \frac{-1}{ d^2 }
 = -2 \rmnz (2)
 .
\]
Comparing with (\ref{eq:fstc}),
we have the famous formula
\begin{equation}
 \label{eq:rmnt}
 \rmnz (2) = \frac{\pi^2}{6}
 .
\end{equation}
Since Jacobi forms \( \phi_{12,1} \) and \( \phi_{10,1} \) are
constructed from (Jacobi or lattice) theta-functions,
without using the Riemann's
zeta-function, this is a new proof of (\ref{eq:rmnt}).

\bigskip

\noindent
{\bf{(Case: \( s=0 \))}} \\
From (\ref{eq:fsts}), we have
\begin{align*}
 \lim_{ \tau \to \ii \infty } f_{(0,t)} ( \tau )
 & =
 \frac{1}{t^2} +
 \sum_{d \in \ZZ - \{ 0 \} }
 \frac{
 \left(2-\frac{t}{d}\right)t
 }{
 \left(1-\frac{t}{d}\right)^2
 }
 \cdot \frac{1}{d^3}
 \\
 & =
 \frac{1}{t^2} +
 \sum_{d \in \ZZ - \{ 0 \} }
 \frac{t}{d^3}
 \sum_{n=0}^{\infty}
 (n+2) \left( \frac{t}{d} \right)^n
 \\
 & =
 \frac{1}{t^2} + 2
 \sum_{n=1}^{\infty}
 \sum_{d=1}^{\infty}
 (2n+1) \frac{t^{2n}}{d^{2n+2}}
 \\
 & =
 \frac{1}{t^2} + 2 \sum_{n=1}^{\infty} (2n+1) \rmnz (2n+2) t^{2n}
 .
\end{align*}
Hence we have
\begin{equation}
 \label{eq:rmnr}
 \frac{1}{t^2} + 2 \sum_{n=1}^{\infty} (2n+1) \rmnz (2n+2) t^{2n}
 =
 - \dfrac{\pi^2}{3} \cdot
 \dfrac{\etp(t)+10+\etp(-t)}{\etp(t)-2+\etp(-t)}
 .
\end{equation}
This is a relation between the special values of
the Riemann's zeta-function.
We remark that the equation (\ref{eq:rmnr}) can be shown
by using the Bernoulli numbers \( B_{2n} \):
\[
 \frac{1}{2}z + \frac{z}{e^z -1}= \sum_{n=0}^{\infty} \frac{B_{2n}}{(2n)!} z^{2n}
 ,
 \qquad
 \rmnz(2n+2) = (-1)^n \frac{B_{2n+2} (2 \pi )^{2n+2} }{2 (2n+2)!}
 .
\]


\section{From the Weierstrass \( \zeta \)-function}

In this section we construct
elliptic modular forms of weight \( 1 \) from
the Weierstrass \( \zeta \)-function.
Since the Weierstrass \( \zeta \)-function
is quasi-periodic and is not doubly periodic,
its special value itself does not have
a modularity.
Classically, to gain a modularity,
we modify it to the Hecke form
by decreasing
the Weierstrass \( \eta \)-function (cf.~\cite[Chapter 15]{La1}
or~\cite[{\S}4.8]{DS1}).
However, here we can construct an elliptic modular form
taking the difference of two special values properly.
Although this is a corollary of the classical argument,
here we calculate the values at some cusps
by much easier calculation.

\subsection{Definition and notation}

Again let \( \Omega \subset \CC \) be a \( \ZZ \)-module
generated by two \( \RR \)-linearly independent elements.
The Weierstrass \( \zeta \)-function is defined by
\begin{align*}
 \zeta ( \Omega ,z)
 & :=
 \frac{1}{z} + \sum_{\omega \in \Omega - \{ 0 \} } \left(
 \frac{1}{z- \omega} + \frac{1}{\omega} + \frac{z}{\omega^2} \right)
 \\
 & :=
 \frac{1}{z} - \sum_{\omega \in \Omega - \{ 0 \} }
 \frac{z^2}{(\omega-z)\omega^2}
 \\
 & :=
 \frac{1}{z} - \sum_{\omega \in \Omega - \{ 0 \} }
 \frac{z^2}{\left(1-\frac{z}{\omega}\right)}
 \cdot \frac{1}{\omega^3}
 .
\end{align*}
For a while we fix \( \Omega \).
Since the sum in the third line of the above definition converges
absolutely and locally uniformly
with respect to \( z \), \( \zeta ( \Omega ,z) \) is
a meromorphic function on \( z \) in \( \CC \).
The set of all poles of \( \zeta ( \Omega ,z) \) is \( \Omega \) and
the order at each pole is \( 1 \).
It is not doubly periodic,
however, \( \zeta ( \Omega ,z+ \omega ) - \zeta ( \Omega ,z) \) does
not depend on \( z \), but only on \( \omega \).

\bigskip

In this paper we move \( \Omega \) as well as \( z \).
It is easy to see that
\[
 \zeta ( \Omega ,z) = j \zeta (j \Omega ,jz)
 \quad ( j \in \CC - \{ 0 \} )
 .
\]
Here we put
\[
 \zeta ( \tau ,z ) := \zeta ( \tau \ZZ + \ZZ , z)
 .
\]
By a similar argument as above, we know
that \( \zeta \) is a meromorphic function on \( \HH \times \CC \).
The set of all poles of \( \zeta ( \tau ,z ) \) is \(
\{ \; (\tau,z) \; | \; \exists s,t \in \ZZ \text{ s.t. } z= s \tau +t \; \} \).
We define \( \eta_1 ( \tau ) \) and \( \eta_2 ( \tau) \) by
\[
 \zeta ( \tau , z+ \tau ) - \zeta ( \tau , z) = \eta_1 ( \tau )
 \quad \text{ and } \quad
 \zeta ( \tau , z+1 ) - \zeta ( \tau , z) = \eta_2 ( \tau )
 .
\]

\subsection{Construction of an elliptic modular form}

For \( (s,t) \in \QQ^2 - \ZZ^2 \), we define
a holomorphic function on \( \HH \) by
\[
 g_{(s,t)} ( \tau ) := \zeta ( \tau , s \tau +t)
 .
\]
Then the following lemma holds.
\begin{lemm}
\label{lemm:gsta}
We have
\[
 \left( g_{(s,t)} |_1 A \right) ( \tau ) :=
 (c \tau +d)^{-1} g_{(s,t)} \left( \frac{a \tau +b}{c \tau +d} \right)
 = g_{(s,t)A} ( \tau )
\]
for any \( A = \abcd \in \SL (2, \ZZ ) \).
\end{lemm}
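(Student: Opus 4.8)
The plan is to mirror, line for line, the proof of Lemma~\ref{lemm:fsta}, substituting the weight-$1$ homogeneity of $\zeta$ for the weight-$2$ homogeneity of $\wp$. First I would unfold the definition of the slash action: by $g_{(s,t)}(\tau)=\zeta(\tau,s\tau+t)$ and $\zeta(\tau',z)=\zeta(\tau'\ZZ+\ZZ,z)$,
\[
 \left( g_{(s,t)} |_1 A \right)(\tau)
 = (c\tau+d)^{-1}\,
 \zeta\!\left(\tfrac{a\tau+b}{c\tau+d}\ZZ+\ZZ,\;
 s\tfrac{a\tau+b}{c\tau+d}+t\right).
\]
The whole argument then rests on the scaling relation $\zeta(\Omega,z)=j\,\zeta(j\Omega,jz)$ established earlier, applied with $j=c\tau+d$.

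The key computational step is to feed $\Omega=\tfrac{a\tau+b}{c\tau+d}\ZZ+\ZZ$ and $z=s\tfrac{a\tau+b}{c\tau+d}+t$ into that relation. Multiplying the lattice by $j=c\tau+d$ gives $(a\tau+b)\ZZ+(c\tau+d)\ZZ$, and multiplying the argument gives $s(a\tau+b)+t(c\tau+d)$, so the homogeneity relation produces exactly one factor of $(c\tau+d)$, which cancels against the prefactor $(c\tau+d)^{-1}$ coming from the weight-$1$ slash. This cancellation is the whole point: the weight $1$ in $|_1$ is precisely matched to the degree $1$ of $\zeta$ in $(\Omega,z)$, just as weight $2$ matched degree $2$ in the $\wp$-case. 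After cancellation one is left with $\zeta\bigl((a\tau+b)\ZZ+(c\tau+d)\ZZ,\,s(a\tau+b)+t(c\tau+d)\bigr)$.

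To finish I would invoke the two facts that were used identically in Lemma~\ref{lemm:fsta}: since $A\in\SL(2,\ZZ)$ is invertible over $\ZZ$, the basis change sends $\{\tau,1\}$ to $\{a\tau+b,\,c\tau+d\}$ without changing the lattice, so $(a\tau+b)\ZZ+(c\tau+d)\ZZ=\tau\ZZ+\ZZ$; and the argument rewrites as $s(a\tau+b)+t(c\tau+d)=(sa+tc)\tau+(sb+td)$, whose coefficient vector is $(s,t)A$. Thus the expression collapses to $\zeta\bigl(\tau,(sa+tc)\tau+(sb+td)\bigr)=g_{(s,t)A}(\tau)$, which is the claim. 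I do not expect a genuine obstacle here; the only point deserving care is conceptual rather than technical, namely that this modularity uses only the homogeneity of $\zeta$ together with the $\SL(2,\ZZ)$-invariance of the lattice, and \emph{not} any periodicity. This is worth flagging, since $\zeta$ is merely quasi-periodic; the failure of double periodicity plays no role in this lemma and only becomes relevant later, when one forms differences of the $g_{(s,t)}$ to produce an honest modular form.
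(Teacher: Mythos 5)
Your proposal is correct and follows the paper's own proof step for step: unfold the slash action, apply the homogeneity relation \( \zeta(\Omega,z)=j\,\zeta(j\Omega,jz) \) with \( j=c\tau+d \) to cancel the weight-\(1\) factor, and then use the \( \SL(2,\ZZ) \)-invariance of the lattice together with the rewriting \( s(a\tau+b)+t(c\tau+d)=(sa+tc)\tau+(sb+td) \). Your closing remark that only homogeneity and lattice invariance (not periodicity) are used is a correct observation, consistent with the paper's later treatment of the quasi-periodicity via the differences \( h_{r,(s,t)} \).
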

\proof
\begin{align*}
 \left( g_{(s,t)} |_1 A \right) ( \tau )
 & :=
 (c \tau +d)^{-1} g_{(s,t)} \left( \frac{a \tau +b}{c \tau +d} \right)
 \\
 & =
 (c \tau +d)^{-1} \zeta \left( \frac{a \tau +b}{c \tau +d} ,
 s \frac{a \tau +b}{c \tau +d} +t \right)
 \\
 & =
 (c \tau +d)^{-1} \zeta \left( \frac{a \tau +b}{c \tau +d} \ZZ + \ZZ ,
 s \frac{a \tau +b}{c \tau +d} +t \right)
 \\
 & =
 \zeta \bigl( (a \tau +b) \ZZ + (c \tau +d) \ZZ ,
 s(a \tau +b)+t(c \tau +d) \bigr)
 \\
 & =
 \zeta \bigl( \tau \ZZ + \ZZ , (sa+tc) \tau + (sb+td) \bigr)
 \\
 & =
 \zeta \bigl( \tau , (sa+tc) \tau + (sb+td) \bigr)
 \\
 & =
 g_{(s,t)A} ( \tau )
 .
\end{align*}
\qed

\noindent
However, \( g_{(s,t)} \) itself is not modular, since we have
\begin{equation}
\label{eq:gstt}
 \left( g_{(s,t)} |_1 A \right) ( \tau ) -g_{(s,t)} ( \tau )
 =
 g_{(s,t)A} ( \tau ) - g_{(s,t)} ( \tau )
 =
 u \eta_1 ( \tau ) + v \eta_2 ( \tau )
\end{equation}
for any \( A \in \Gamma_{(s,t)} \),
where \( u:= s(a-1)+tc \in \ZZ \) and \( v:= sb+t(d-1) \in \ZZ \).

\bigskip

Now we take \( r \in \ZZ - \{ 0 \} \) such that \( (rs,rt) \not\in \ZZ^2 \).
We remark that \( \Gamma_{(s,t)} \subset \Gamma_{(rs,rt)} \).
Let
\[
 h_{r,(s,t) } ( \tau ) := r g_{(s,t)} ( \tau ) - g_{(rs,rt)} ( \tau )
 .
\]
Then, from (\ref{eq:gstt}), \( h_{r,(s,t)} \) has
the automorphic property of weight \( 1 \) with
respect to \( \Gamma_{(s,t)} \), namely,
we have \( h_{r,(s,t)} |_1 A = h_{r,(s,t)} \) for
any \( A \in \Gamma_{(s,t)} \).
The following lemma holds.
\begin{lemm}
\label{lemm:hrst}
\( h_{r,(s,t)} \) is bounded at each cusp.
\end{lemm}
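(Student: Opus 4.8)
The plan is to transport every cusp to \( \ii\infty \) by the automorphy already in hand, and then to produce a closed form for \( h_{r,(s,t)} \) in which the quasi-periodic (\( \eta_1,\eta_2 \)-type) growth cancels, leaving a visibly bounded expression. First I would record that Lemma~\ref{lemm:gsta} gives, for \( A\in\SL(2,\ZZ) \), both \( g_{(s,t)}|_1 A = g_{(s,t)A} \) and \( g_{(rs,rt)}|_1 A = g_{(rs,rt)A} \). Writing \( (s',t'):=(s,t)A \), we have \( (rs,rt)A = (rs',rt') \), so
\[
 h_{r,(s,t)}|_1 A = r\,g_{(s',t')} - g_{(rs',rt')} = h_{r,(s',t')} = h_{r,(s,t)A} .
\]
Every cusp of \( \Gamma_{(s,t)} \) is represented by \( A\cdot\ii\infty \) for some \( A\in\SL(2,\ZZ) \), and boundedness there means boundedness of \( h_{r,(s,t)}|_1 A = h_{r,(s,t)A} \) as \( \im\tau\to\infty \). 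Since \( A \) is invertible over \( \ZZ \), the hypotheses \( (s,t)\notin\ZZ^2 \) and \( (rs,rt)\notin\ZZ^2 \) are preserved under \( (s,t)\mapsto(s,t)A \). Thus it suffices to show, for every admissible \( (s,t) \), that \( h_{r,(s,t)}(\tau) \) stays bounded as \( \im\tau\to\infty \) with \( \re\tau \) confined to one period; periodicity then covers the whole strip.

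Next I would compute a closed form. Writing \( z:=s\tau+t \) and \( \Omega:=\tau\ZZ+\ZZ \), and using that the \( (rs,rt) \)-argument is \( rz \), I substitute the absolutely convergent third series for \( \zeta \) into \( h_{r,(s,t)} = r\,\zeta(\tau,z)-\zeta(\tau,rz) \). The elementary identity \( \frac{r}{1-rz/\omega}-\frac{1}{1-z/\omega}=\frac{r-1}{(1-rz/\omega)(1-z/\omega)} \) collapses the two series into
\[
 h_{r,(s,t)}(\tau) = \frac{r^2-1}{rz} + r(r-1)\sum_{\omega\in\Omega-\{0\}}\frac{z^2}{\omega(\omega-z)(\omega-rz)} .
\]
The decisive feature is that the terms linear in \( z \) — precisely the \( \eta_1,\eta_2 \) contributions responsible for the non-modularity of a single \( g \) — have cancelled, so nothing grows linearly in \( \tau \).

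Then I would split the sum according to \( \omega=d \) (\( c=0 \)) and \( c\neq0 \). The prefactor \( \frac{r^2-1}{rz} \) tends to \( 0 \) if \( s\neq0 \) and equals \( \frac{r^2-1}{rt} \) if \( s=0 \), hence is bounded. For the rank-one part \( r(r-1)\sum_{d\neq0}\frac{z^2}{d(d-rz)(d-z)} \), partial fractions in \( d \) give residues \( \tfrac1r,\ \tfrac{1}{r(r-1)},\ \tfrac{1}{1-r} \) with zero sum, and summing symmetrically over \( \pm d \) together with \( \sum_{d\neq0}(d-w)^{-1}=w^{-1}-\pi\cot\pi w \) and \( \cot\pi w\to\mp\ii \) as \( \im w\to\pm\infty \) shows this part tends to a finite limit (constant when \( s=0 \)); in particular it is bounded. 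For \( c\neq0 \), I apply the arithmetic–geometric mean inequality \( \frac{1}{|\omega||\omega-z||\omega-rz|}\leq\frac13\bigl(|\omega|^{-3}+|\omega-z|^{-3}+|\omega-rz|^{-3}\bigr) \), so the \( c\neq0 \) contribution is at most \( \tfrac{|r(r-1)|}{3}\,|z|^2(\Sigma_0+\Sigma_1+\Sigma_r) \), where \( \Sigma_0=\sum_{c\neq0,d}|\omega|^{-3} \) and \( \Sigma_1,\Sigma_r \) are the same sum with \( \omega \) replaced by \( \omega-z=(c-s)\tau+(d-t) \) and \( \omega-rz=(c-rs)\tau+(d-rt) \). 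Lemma~\ref{lemm:eies} gives \( \Sigma_0=O((\im\tau)^{-2}) \), and the same integral comparison gives \( \Sigma_1,\Sigma_r=O((\im\tau)^{-2}) \), the only caveat being one degenerate row (when \( s \), resp. \( rs \), is a nonzero integer) contributing the \( \tau \)-independent convergent sum \( \sum_d|d-t|^{-3} \), resp. \( \sum_d|d-rt|^{-3} \). Since \( |z|^2=O((\im\tau)^2) \), the product \( |z|^2(\Sigma_0+\Sigma_1+\Sigma_r)=O(1) \), so the \( c\neq0 \) part is bounded.

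The main obstacle is exactly this last estimate. Its subtlety is that for \( s\neq0 \) the shift \( z \) inside the denominators grows like \( \im\tau \): the cubic denominator supplies a factor \( |\omega|^{-3} \), but the numerator costs \( |z|^2 \), and the two balance to give \( O(1) \) rather than \( o(1) \). This is the structural reason that \( h_{r,(s,t)} \) is merely bounded — as befits weight \( 1 \) — and does not vanish at the cusp the way the weight-\( 2 \) form \( f_{(s,t)} \) did. Consequently the needed input is not Lemma~\ref{lemm:eies} verbatim but its shifted analog \( \Sigma_1,\Sigma_r=O((\im\tau)^{-2}) \), uniform for bounded \( \re\tau \); establishing this by comparing each row sum with \( \int_{-\infty}^{\infty}\bigl(u^2+((c-s)\im\tau)^2\bigr)^{-3/2}\,du=2\,((c-s)\im\tau)^{-2} \) and then summing over \( c \) is the one piece of genuine work.
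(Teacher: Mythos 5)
Your overall architecture coincides with the paper's: reduce every cusp to \( \ii\infty \) via Lemma~\ref{lemm:gsta}, pass to the closed form (the paper's equation~(\ref{eq:hsts})), and split the lattice sum into the row \( c=0 \) and the rows \( c\neq0 \). Your treatment of the \( c=0 \) row by partial fractions and the cotangent expansion is in fact more explicit than the paper's, which does not say why that row stays bounded when \( s\neq0 \).

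The gap is in your final estimate for the rows \( c\neq0 \), and you flag the problem yourself without resolving it. When \( rs\in\ZZ-\{0\} \) --- which is admissible, e.g.\ \( r=2 \), \( (s,t)=(\tfrac12,\tfrac14) \), so that \( (rs,rt)=(1,\tfrac12)\notin\ZZ^2 \) --- the row \( c=rs \) of \( \Sigma_r \) is the \( \tau \)-independent constant \( \sum_d|d-rt|^{-3}>0 \), so \( \Sigma_r=O(1) \) and not \( O((\im\tau)^{-2}) \); multiplying by \( |z|^2\asymp s^2(\im\tau)^2 \) then gives \( O((\im\tau)^2) \), not the \( O(1) \) you assert in the very next clause. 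This is not an artifact of the AM--GM step that a sharper inequality would repair: on that degenerate row one has \( \omega-rz=d-rt \) bounded while \( |\omega|,|\omega-z|\asymp\im\tau \), so the summands behave like \( \frac{1}{d-rt} \), and the sum of their \emph{absolute values} over \( |d|\lesssim\im\tau \) already grows like \( \log\im\tau \). The row is in fact bounded, but only because of cancellation between \( d \) and its reflection; it must be handled by the same symmetric-summation/cotangent device you used for \( c=0 \), not by absolute values. (To be fair, the paper's own bound \( 4r(r-1)\sum|z^2/\omega^3| \), which implicitly needs \( |rz/\omega| \) bounded away from \( 1 \) outside finitely many \( (c,d) \), glosses over exactly the same row, so you have not missed an idea the paper supplies; but as written your conclusion \( |z|^2(\Sigma_0+\Sigma_1+\Sigma_r)=O(1) \) is false in the degenerate case, and that row needs its own argument.)
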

We give a proof of this lemma in the next subsection.
Consequently, we have the following theorem.
\begin{theo}
\label{theo:hrst}
\( h_{r,(s,t)} \) is an elliptic modular form of weight \( 1 \) with
respect to \( \Gamma_{(s,t)} \).
\end{theo}
More generally, the following theorem holds.
\begin{theo}
Let \( U:= \left\{ u_1 , u_2 , \dots , u_m \right\} \) be
a set of \( m \) tuples, where \( u_j := \left( s_j , t_j \right)
\in \QQ^2 - \ZZ^2 \).
We put
\[
 h_U ( \tau ) := \sum_{j=1}^m g_{u_j} ( \tau )
\]
and
\[
 \Gamma_U := \bigcap_{j=1}^m \Gamma_{u_j}
 .
\]
If \( \sum_{j=1}^m u_j =(0,0) \),
then \( h_U \) is an elliptic modular form of weight \( 1 \) with
respect to \( \Gamma_U \)
\end{theo}
Here we remark that \( g_{(-s,-t)} ( \tau ) = - g_{(s,t)} ( \tau ) \).
Therefore, this theorem contains Theorem \ref{theo:hrst}.
This theorem can be shown by the same way as Theorem \ref{theo:hrst}.

\subsection{Values at cusps}

In this subsection, we give a proof of Lemma \ref{lemm:hrst},
in a similar manner as subsection 1.3.
Here we fix \( s \) and \( t \).
By the argument in the previous section,
we may assume that \( 0 \leqq s<1 \) and \( 0 \leqq t<1 \).
By Lemma \ref{lemm:gsta},
it is enough to show
that \( h_{r,(s,t)} ( \tau ) \) is bounded for
sufficiently large \( \im \tau \).
Let \( L \) be a common denominator of \( s \) and \( t \).
Since \( h_{r,(s,t)} ( \tau +L ) = h_{r,(s,t)}( \tau ) \),
we may assume \( \re (\tau) <L \).
Also we assume \( \im ( \tau ) >L \).
From the definition, we have
\begin{equation}
\label{eq:hsts}
 h_{r,(s,t)} ( \tau )
 =
 \frac{r^2-1}{rz(\tau)}
 + \! \! \! \!
 \sum_{(c,d) \in \ZZ^2 - \{(0,0)\}}
 \frac{r(r-1)z(\tau)^2}{
 \left(1- \frac{z(\tau)}{\omega(\tau)} \right)
 \left(1- \frac{rz(\tau)}{\omega(\tau)} \right) }
 \cdot \frac{1}{\omega(\tau)^3}
 ,
\end{equation}
where \( \omega(\tau) :=c \tau +d \) and \( z(\tau):=s \tau +t \).
The sum in (\ref{eq:hsts}) converges absolutely.
We decompose it as
\[
 h_{r,(s,t)} ( \tau ) = \frac{r^2-1}{rz(\tau)}
 +
 \sum_{d \in \ZZ - \{ 0 \}}
\frac{r(r-1)z(\tau)^2}{
 \left(1- \frac{z(\tau)}{d} \right)
 \left(1- \frac{rz(\tau)}{d} \right) }
 \cdot \frac{1}{d^3}
 +
 h_{r,(s,t)}^* ( \tau )
 ,
\]
where we put
\[
 h_{r,(s,t)}^* ( \tau ) :=
 \sum_{\substack{c \in \ZZ - \{ 0 \} \\ d \in \ZZ }}
 \frac{r(r-1)z(\tau)^2}{
 \left(1- \frac{z(\tau)}{\omega(\tau)} \right)
 \left(1- \frac{rz(\tau)}{\omega(\tau)} \right) }
 \cdot \frac{1}{\omega(\tau)^3}
 .
\]
Here we show
that \( h_{r,(s,t)}^* ( \tau ) \) is bounded for
large \( \im \tau \).
Since
\[
 \left| \frac{z(\tau)}{\omega(\tau)} \right|
 <
 \frac{1}{2}
 \quad \text{ for any } \tau
\]
except for finitely many \( (c,d) \),
we have
\[
 \left| h_{r,(s,t)}^* ( \tau ) -
 \sum_{\text{finite}}
 \frac{r(r-1)z(\tau)^2}{
 \left(1- \frac{z(\tau)}{\omega(\tau)} \right)
 \left(1- \frac{rz(\tau)}{\omega(\tau)} \right) }
 \cdot \frac{1}{\omega(\tau)^3}
 \right| <
 4r(r-1) \! \! \! \sum_{\substack{c \in \ZZ - \{ 0 \} \\ d \in \ZZ }}
 \left| \frac{z(\tau)^2}{\omega(\tau)^3} \right|
 .
\]
Hence, by using this Lemma \ref{lemm:eies}, we know
that \( h_{r,(s,t)}^* ( \tau ) \) is bounded for
large \( \im \tau \).
Therefore, \( h_{r,(s,t)} ( \tau ) \) is bounded for
large \( \im \tau \) also.

\bigskip

Although we know that \( h_{r,(s,t)} ( \tau ) \) is bounded for
large \( \im \tau \), to calculate the value at \( \ii \infty \) is
not so easy, because \( h_{r,(s,t)} \) does not vanish
at \( \ii \infty \) except when \( s=0 \).
Hereafter, we calculate the value at \( \ii \infty \) only
in the case of \( s=0 \).

\bigskip

\noindent
{\bf{(Case: \( s=0 \))}} \\
When \( s=0 \), we have
\[
 \lim_{ \tau \to \ii \infty } h_{r,(0,t)}^* ( \tau )
 =0
 .
\]
Therefore we have
\begin{align*}
 \lim_{ \tau \to \ii \infty }
 h_{r,(0,t)} ( \tau )
 &
 =
 \frac{r^2-1}{rt} +
 \sum_{d \in \ZZ - \{ 0 \}}
 \frac{r(r-1)t^2}{
 \left(1- \frac{t}{d} \right)
 \left(1- \frac{rt}{d} \right) }
 \cdot \frac{1}{d^3}
 \\
 & =
 \frac{r^2-1}{rt} + r
 \sum_{d \in \ZZ - \{ 0 \}}
 \frac{t^2}{d^3}
 \sum_{n=0}^{\infty}
 \left( r^{n+1} -1 \right)
 \left( \frac{t}{d} \right)^n
 \\
 & \hspace*{-16pt} =
 \frac{r^2-1}{rt} + 2r
 \sum_{n=1}^{\infty}
 \sum_{d=1}^{\infty}
 \left( r^{2n} -1 \right)
 \frac{t^{2n+1}}{d^{2n+2}}
 \\
 & \hspace*{-32pt} =
 \frac{r^2-1}{rt} +2r \sum_{n=1}^{\infty}
 \left( r^{2n} -1 \right) \rmnz (2n+2) t^{2n+1}
 \\
 & \hspace*{-48pt} =
 \frac{r^2-1}{rt} +r \sum_{n=1}^{\infty}
 \left( r^{2n} -1 \right)
 \frac{B_{2n+2}}{(2n+2)!} (-1)^n (2 \pi )^{2n+2} t^{2n+1}
 \\
 & \hspace*{-64pt} =
 \frac{r^2-1}{rt} +
 \left(
 - \frac{1}{rt} \sum_{n=1}^{\infty}
 \frac{B_{2n+2}}{(2n+2)!} \left( 2 \pi \ii rt \right)^{2n+2}
 + \frac{r}{t} \sum_{n=1}^{\infty}
 \frac{B_{2n+2}}{(2n+2)!} \left( 2 \pi \ii t \right)^{2n+2}
 \right)
 \\
 & \hspace*{-64pt} =
 \frac{r^2-1}{rt}
 - \frac{1}{rt}
 \left( -1+ ( \pi \ii rt ) - \frac{1}{12} (2 \pi \ii rt)^2
 + \frac{2 \pi \ii rt}{\etp(rt)-1} \right)
 \\
 & \hspace*{-32pt}
 + \frac{r}{t}
 \left( -1+ ( \pi \ii t ) - \frac{1}{12} (2 \pi \ii t)^2
 + \frac{2 \pi \ii t}{\etp(t)-1} \right)
 \\
 & \hspace*{-64pt} =
 2\pi \ii
 \left(
 \frac{r-1}{2}
 + \frac{r}{\etp(t)-1}
 - \frac{1}{\etp(rt)-1}
 \right)
 .
\end{align*}

\begin{exam}
\( h_{2,(0,\frac{1}{3})} \) appears as Eisenstein series
of type \( \alg \) in Saito~\cite[{\S}8]{Sa1}, that is, \( \omega_0
\left( 2 \zeta \left( \frac{1}{3} \omega_0 \right) -
\zeta \left( \frac{2}{3} \omega_0 \right) \right) \).
This \( h_{2,(0,\frac{1}{3})} \) is an elliptic modular form
of weight \( 1 \) with respect to \( \Gamma_1 (3) \). We have
\[
 \lim_{\tau \to \infty}
 h_{2,(0,\frac{1}{3})} ( \tau )=
 - \sqrt{3} \pi \ii
 .
\]
Hence \( h_{2,(0,\frac{1}{3})} = - \sqrt{3} \pi \ii \alpha_1 \),
which is an unique modular forms of weight \( 1 \) with
respect to \( \Gamma_1 (3) \) up to constant multiplier.
Here we put
\[
 \Gamma_1 (L) :=
 \left\{ \; A = \abcd \in \SL (2, \ZZ ) \; \middle| \;
 \begin{array}{c}
 a \equiv d \equiv 1 \pmod L , \\ b \equiv 0 \pmod L
 \end{array}
 \; \right\}
 .
\]
\end{exam}


\section*{Acknowledgements}

This work was partially supported by JSPS KAKENHI Grant Number
JP19K03429 (the first named author)
and
JP18H01116 (the second named author).


\bigskip


\bigskip

\end{document}